\pgfplotsset{compat=1.16}
\newtheorem{theorem}{Theorem}[section]
\newtheorem{lemma}[theorem]{Lemma}
\newtheorem{remark}[theorem]{Remark}
\def\section{\@startsection {section}{1}{\z@}{3.25ex plus 1ex minus
		.2ex}{1.5ex plus .2ex}{\large\bf}}
\def\subsection{\@startsection{subsection}{2}{\z@}{3.25ex plus 1ex minus
		.2ex}{1.5ex plus .2ex}{\normalsize\bf}}
\title{A new numerical scheme for It\^o stochastic differential equations based on Wick-type Wong-Zakai arguments}
\author{Alberto Lanconelli\thanks{Dipartimento di Scienze Statistiche Paolo Fortunati, Università di Bologna, Bologna, Italy. \textbf{e-mail}: alberto.lanconelli2@unibo.it} \and Berk Tan Perçin\thanks{Dipartimento di Scienze Statistiche Paolo Fortunati, Università di Bologna, Bologna, Italy. \textbf{e-mail}: berktan.percin2@unibo.it}}
\date{\today}
\begin{document}
	
	\maketitle

	\bigskip
	
	\begin{abstract}
	The aim of this note is to propose a novel numerical scheme for drift-less one dimensional stochastic differential equations of It\^o's type driven by standard Brownian motion. Our approximation method is equivalent to the well known Milstein scheme as long as the rate of convergence is concerned, i.e. it is strongly convergent with order one, but has the additional desirable property of being exact for linear diffusion coefficients. Our approach is inspired by Wick-type Wong-Zakai arguments in the sense that we only smooth the white noise through polygonal approximation of the Brownian motion while keep the equation in differential form. A first order Taylor expansion of the diffusion coefficient allows us to solve the resulting equation explicitly and hence to provide an implementable approximation scheme.  
	\end{abstract}
	
	Key words and phrases: Brownian motion, stochastic differential equation, Wick product, Milstein scheme. \\
	
	AMS 2000 classification: 60H10, 65C30, 60H25.\\
	
	\allowdisplaybreaks

\section{Statement of the main result}

Let $\{X(t)\}_{t\in [0,1]}$ be the unique strong solution of the It\^o stochastic differential equation (SDE)
\begin{align}\label{SDE}
\begin{cases}
dX(t)=\sigma(X(t))dB(t),& t\in ]0,1];\\
X(0)=a,&
\end{cases}
\end{align}
where $\{B(t)\}_{t\in [0,1]}$ is a standard one dimensional Brownian motion, $\sigma:\mathbb{R}\to\mathbb{R}$ is a measurable function while $a\in\mathbb{R}$ is a  constant. Assume that:
\begin{itemize}
	\item $\sigma$ and $\sigma\sigma'$ are continuously differentiable with bounded derivatives.
\end{itemize}
We have the following result.

\begin{theorem}\label{main theorem}
For $N\in\mathbb{N}$ let $\{x_N(k)\}_{k\in\{0,...,N\}}$ be defined recursively as
\begin{align}
\begin{cases}\label{scheme}
x_N(k+1)=x_N(k)+\frac{\sigma(x_N(k))}{\sigma'(x_N(k))}\left(\exp^{\diamond}\left\{\sigma'(x_N(k))\left(B\left(\frac{k+1}{N}\right)-B\left(\frac{k}{N}\right)\right)\right\}-1\right),& k\in\{0,...,N-1\};\\
x_N(0)=a.
\end{cases}
\end{align}
Then, for all $N\in\mathbb{N}$ we have
\begin{align}\label{inequality theorem}
\mathbb{E}[|X(1)-x_N(N)|]\leq \frac{\mathtt{c}}{N},
\end{align}
where $\{X(t)\}_{t\in [0,1]}$ is the unique strong solution of the SDE \eqref{SDE} while $\mathtt{c}$ is a positive constant independent of $N$. Moreover, when $\sigma:\mathbb{R}\to\mathbb{R}$ is linear, the sequence $\{x_N(k)\}_{k\in\{0,...,N\}}$ coincides with $\{X\left(\frac{k}{N}\right)\}_{k\in\{0,...,N\}}$. \\
In words: scheme \eqref{scheme} provides a strong approximation of order one for the unique strong solution of the SDE \eqref{SDE}; in addition, the scheme is exact when the diffusion coefficient $\sigma$ is linear.
\end{theorem}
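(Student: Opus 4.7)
\emph{Exactness.} I would first dispose of the exactness claim, which is a direct computation. For $\sigma(x) = \alpha x + \beta$ with $\alpha\neq 0$, introduce $y_N(k) := x_N(k) + \beta/\alpha$; substitution converts \eqref{scheme} into the multiplicative recursion $y_N(k+1) = y_N(k)\exp^{\diamond}\{\alpha \Delta B_k\} = y_N(k)\exp\{\alpha\Delta B_k - \alpha^2/(2N)\}$ (with $\Delta B_k := B((k+1)/N)-B(k/N)$), which telescopes to $y_N(k) = (a + \beta/\alpha)\exp\{\alpha B(k/N) - \alpha^2 k/(2N)\}$. Undoing the shift recovers the closed-form geometric-Brownian-motion expression for $X(k/N)$; the degenerate case $\alpha=0$ follows by taking $\alpha \to 0$.

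\emph{Convergence rate.} The key observation is that on each subinterval $[k/N,(k+1)/N]$ the value $x_N(k+1)$ defined by \eqref{scheme} is exactly the time-$(k+1)/N$ value of the solution of the Taylor-linearized SDE
\[
d\tilde X(t) = \bigl[\sigma(x_N(k)) + \sigma'(x_N(k))(\tilde X(t)-x_N(k))\bigr]\,dB(t), \quad \tilde X(k/N)=x_N(k),
\]
since the substitution $Z := \sigma(x_N(k)) + \sigma'(x_N(k))(\tilde X - x_N(k))$ reduces this to $dZ=\sigma'(x_N(k))\,Z\,dB$, whose geometric-Brownian-motion solution yields the Wick-exponential expression in \eqref{scheme}. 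From here my strategy is comparison with the Milstein scheme: Taylor-expanding the Wick exponential in $\Delta B_k$ gives
\[
x_N(k+1) - x_N(k) = \sigma(x_N(k))\Delta B_k + \tfrac12\,\sigma(x_N(k))\sigma'(x_N(k))\bigl[(\Delta B_k)^2 - 1/N\bigr] + R_k,
\]
whose first two terms constitute a Milstein increment and whose remainder $R_k$ can be computed explicitly. Since $\mathbb{E}[\exp^{\diamond}\{c\Delta B_k\}]=1$, both the Milstein increment and the Wick increment have zero conditional mean given $\mathcal{F}_{k/N}$, whence $\mathbb{E}[R_k\mid\mathcal{F}_{k/N}]=0$; boundedness of $\sigma'$ together with explicit log-normal moment formulas then yields $\|R_k\|_{L^2}=O(N^{-3/2})$. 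An It\^o-Taylor expansion of $X((k+1)/N)-X(k/N)$, valid under the stated hypotheses on $\sigma$ and $\sigma\sigma'$, produces the analogous Milstein increment at $X(k/N)$ plus a remainder $\rho_k$ of the same order. Subtracting the two expansions for $e_k := x_N(k)-X(k/N)$ and using Lipschitz continuity of $\sigma$ and $\sigma\sigma'$ together with the orthogonality of the centered increments $R_k-\rho_k$ leads to a recursion of the form $\mathbb{E}[e_{k+1}^2]\leq (1+c/N)\,\mathbb{E}[e_k^2]+C/N^{3}$; summing gives $\mathbb{E}[e_N^2]\leq C/N^2$, and Jensen's inequality delivers \eqref{inequality theorem}.

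\emph{Main obstacle.} The delicate bookkeeping concerns uniform moment bounds: one needs $\sup_{N,k}\mathbb{E}[|x_N(k)|^p] < \infty$ for some $p \geq 2$ so that the Lipschitz and linear-growth estimates on $\sigma$ and $\sigma\sigma'$ apply at each step with constants independent of $N$. This ought to follow by induction from the explicit Wick-exponential form of the iteration together with the elementary log-normal moment formulas, but carefully tracking that all constants remain $N$-independent -- especially in the Gronwall step that converts per-step $L^2$-remainders of order $N^{-3/2}$ into a global $L^2$-error of order $N^{-1}$ -- is where the bulk of the technical work will lie.
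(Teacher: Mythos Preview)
Your outline is correct and isolates the right ingredients, but the route differs from the paper's. You compare $x_N(k)$ directly to $X(k/N)$, expanding each as a Milstein increment plus a remainder ($R_k$ for the scheme, $\rho_k$ for the true solution) and running a discrete Gronwall on $e_k = x_N(k)-X(k/N)$. The paper instead inserts the actual Milstein scheme $\{y_N(k)\}$ as an intermediary via the triangle inequality
\[
\mathbb{E}|X(1)-x_N(N)| \leq \mathbb{E}|X(1)-y_N(N)| + \mathbb{E}\bigl[(y_N(N)-x_N(N))^2\bigr]^{1/2},
\]
cites the known order-one Milstein rate for the first term, and bounds the second by the same kind of recursion you describe---but with the crucial simplification that both $x_N$ and $y_N$ have increments that are finite sums of Wick powers of $Z_{k+1}$, so \emph{every} cross term in $\mathbb{E}[(x_N(k+1)-y_N(k+1))^2\mid\mathcal{F}_k]$ vanishes exactly by chaos orthogonality. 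In your direct approach the It\^o--Taylor remainder $\rho_k$ has no clean chaos structure, so its cross terms with the Lipschitz-difference pieces do not vanish and must be handled by Cauchy--Schwarz/Young; this is routine, but it means you are effectively re-proving Milstein strong convergence inside the argument rather than citing it. The paper's route is thus more economical, yours more self-contained. Your identification of the uniform second-moment bound on $x_N(k)$ as the main technical hurdle is exactly right: the paper isolates precisely this as a preliminary lemma, obtaining $\mathbb{E}[x_N(k)^2]\leq (1+M_1 e^{L_1^2/N}/N)^k(a^2+1)-1$ directly from the Wick recursion before the comparison step. For exactness the paper treats only $\sigma(x)=\alpha x$; your affine shift is a mild extension.
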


\begin{remark}[Implementability of the scheme]\label{wick exponential}
Scheme \eqref{scheme} is fully explicit since the term 
\begin{align}\label{1}
\exp^{\diamond}\left\{\sigma'(x_N(k))\left(B\left(\frac{k+1}{N}\right)-B\left(\frac{k}{N}\right)\right)\right\},
\end{align}
which denotes a \emph{Wick exponential} (see e.g. \cite{OksendalSPDEbook,Hubook,Janson}), is simply a shorthand notation for 
\begin{align*}
\exp\left\{\sigma'(x_N(k))\left(B\left(\frac{k+1}{N}\right)-B\left(\frac{k}{N}\right)\right)-\frac{\sigma'(x_N(k))^2}{2N}\right\}
\end{align*}
which makes the overall numerical scheme as
\begin{align*}
	x_N(k+1)=x_N(k)+\frac{\sigma(x_N(k))}{\sigma'(x_N(k))}\Bigg(\exp\Bigg\{\sigma'(x_N(k))\Bigg(B\Big(\frac{k+1}{N}\Big)-B\Big(\frac{k}{N}\Big)\Bigg)-\frac{\sigma'(x_N(k))^2}{2N}\Bigg\} -1\Bigg).
\end{align*}
This expression is fully explicit and hence implementable for approximation purposes.
\end{remark}

\begin{remark}[Vanishing derivative of $\sigma$]\label{vanishing}
Possibly vanishing values of $\sigma'(x_N(k))$ in the ratio $\frac{\sigma(x_N(k))}{\sigma'(x_N(k))}$ from \eqref{scheme} do not cause any problem of ill-posedness. In fact, exploiting the Wick-series expansion of \eqref{1} we can write conditionally on $\mathcal{F}^B_{k/N}$ (here we denote by $\{\mathcal{F}^B_t\}_{t\in [0,1]}$ the augmented Brownian filtration) that
\begin{align*}
&\frac{\sigma(x_N(k))}{\sigma'(x_N(k))}\left(\exp^{\diamond}\left\{\sigma'(x_N(k))\left(B\left(\frac{k+1}{N}\right)-B\left(\frac{k}{N}\right)\right)\right\}-1\right)\\
&\quad\quad=\frac{\sigma(x_N(k))}{\sigma'(x_N(k))}\sum_{j\geq 1}\frac{\sigma'(x_N(k))^j}{j!}\left(B\left(\frac{k+1}{N}\right)-B\left(\frac{k}{N}\right)\right)^{\diamond j}\\
&\quad\quad=\sigma(x_N(k))\sum_{j\geq 1}\frac{\sigma'(x_N(k))^{j-1}}{j!}\left(B\left(\frac{k+1}{N}\right)-B\left(\frac{k}{N}\right)\right)^{\diamond j},
\end{align*}
thus making perfectly sense for possibly vanishing values of $\sigma'(x_N(k))$. Here, the symbol $\cdot^{\diamond j}$ denotes the $j$-th Wick power.
\end{remark}

\begin{remark}[Comparison with Milstein scheme]\label{Milstein remark}
	If we Taylor expand the Wick exponential in \eqref{scheme} up to the second order, we recover the famous Milstein scheme \cite{Milstein,KloedenPlaten}. In fact, recalling the Wick-series expansion of \eqref{1} utilized in Remark \ref{vanishing} we can write
\begin{align*}
&\frac{\sigma(x_N(k))}{\sigma'(x_N(k))}\left(\exp^{\diamond}\left\{\sigma'(x_N(k))\left(B\left(\frac{k+1}{N}\right)-B\left(\frac{k}{N}\right)\right)\right\}-1\right)\\
&\approx\frac{\sigma(x_N(k))}{\sigma'(x_N(k))}\left(\sigma'(x_N(k))\left(B\left(\frac{k+1}{N}\right)-B\left(\frac{k}{N}\right)\right)+\frac{\sigma'(x_N(k))^2}{2}\left(B\left(\frac{k+1}{N}\right)-B\left(\frac{k}{N}\right)\right)^{\diamond 2}\right)\\
&=\sigma(x_N(k))\left(B\left(\frac{k+1}{N}\right)-B\left(\frac{k}{N}\right)\right)+\frac{\sigma(x_N(k))\sigma'(x_N(k))}{2}\left(B\left(\frac{k+1}{N}\right)-B\left(\frac{k}{N}\right)\right)^{\diamond 2}\\
&=\sigma(x_N(k))\left(B\left(\frac{k+1}{N}\right)-B\left(\frac{k}{N}\right)\right)+\frac{\sigma(x_N(k))\sigma'(x_N(k))}{2}\left(\left(B\left(\frac{k+1}{N}\right)-B\left(\frac{k}{N}\right)\right)^{2}-\frac{1}{N}\right),
\end{align*}	
where in the last equality we exploit the identity between Wick powers and Hermite polynomials \cite{Janson}. Substituting the last member above into the right hand side of \eqref{scheme} we obtain the Mistein scheme for the SDE \eqref{SDE}.
\end{remark}

\begin{remark}[Equation with drift]
Following the proof of Theorem \eqref{main theorem} presented in Section \ref{proof} below, one can see how the introduction of a drift term in equation \eqref{SDE} causes a non trivial issue in the derivation of our scheme. In fact, while the Wick exponential possesses a Wick-free representation as mentioned in Remark \ref{wick exponential}, the corresponding term for an equation with drift will fail in that direction. And since the scheme must be fully implementable, the presence of general terms involving Wick products may prevent to reach that goal. We devote to a future study the investigation of such an extension.  
\end{remark}

\begin{remark}
Our main theorem is a further example of the beneficial effect of treating some aspects of the theory of It\^o SDEs from a Wick calculus' perspective. We mention in passing that the role of Wick product as \emph{natural} Gaussian convolution, shown in \cite{DaPeloLanconelliStan_Young} and \cite{LanconelliStan_Bernoulli}, has been identified for Poisson \cite{LanconelliStan_PoissonWick} and Gamma \cite{LanconelliSportelli_Gamma} distributions as well.	The approach presented here (which essentially relies on the properties of chaos decompositions) together with those contributions may lead to analogous approximation results for stochastic equations driven by non-Gaussian noises. 
\end{remark}

\subsection{Heuristic derivation of the scheme}

In this section we describe some basic ideas that lead to the derivation of scheme \eqref{scheme}. This is inspired by Wong-Zakai approximation results \cite{WZ1,WZ2} for It\^o's type SDEs firstly obtained in \cite{HuOksendal} (see also \cite{Hubook} and the references quoted there) and then extended to different contexts in \cite{LanconelliBenAmmou,DaPeloLanconelliStan_WZ,Lanconelli_FP,LanconelliScorolli_WZ,LanconelliScorolli_Zakai}. The key observation guiding the approach has to be found in the correspondence between It\^o-Skorohod integration and Gaussian Wick product; for a detailed discussion on this issue we refer the reader to \cite{OksendalSPDEbook}.
Exploiting this fact we can indeed rewrite the Cauchy problem \eqref{SDE} as
\begin{align}\label{SDE2}
\begin{cases}
\dot{X}(t)=\sigma(X(t))\diamond\dot{B}(t),& t\in ]0,1];\\
X(0)=a,&
\end{cases}
\end{align}
where $\{\dot{B}(t)\}_{t\in [0,1]}$ denotes Gaussian White Noise. Now, for $N\in\mathbb{N}$ we consider the partition of the interval $[0,1]$ given by $\{0,1/N,2/N,...,1\}$ and denote by $\{B_N(t)\}_{t\in [0,1]}$ the associated polygonal approximation of $\{B(t)\}_{t\in [0,1]}$; this means that
\begin{align*}
B_N(t):=B(k/N)+(t-k/N)\frac{B((k+1)/N)-B(k/N)}{1/N},\quad t\in \left[k/N,(k+1)/N\right[, k\in \{0,...,N-1\},
\end{align*}
and replacing $\{B(t)\}_{t\in [0,1]}$ with $\{B_N(t)\}_{t\in [0,1]}$ we get
\begin{align}\label{SDE_N}
\begin{cases}
\dot{X}_N(t)=\sigma(X_N(t))\diamond\dot{B}_N(t),& t\in ]0,1];\\
X_0=a.&
\end{cases}
\end{align}
Notice that the function $t\mapsto\dot{B}_N(t)$ is piece-wise constant; therefore, we can rewrite equation \eqref{SDE_N} more accurately as the collection of the following nested Cauchy problems
\begin{align}\label{SDE system}
\begin{cases}
\dot{x}_{N,k+1}(t)=\sigma(x_{N,k+1}(t))\diamond\frac{B((k+1)/N)-B(k/N)}{1/N},& t\in ]k/N,(k+1)/N];\\
x_{N,k+1}(k/N)=x_{N,k}(k/N),&
\end{cases}
\end{align}
for $k\in\{0,...,N-1\}$ and with the initial condition: $x_{N,1}(0)=x_{N,0}(0)=a$. We now Taylor-expand up to the first order the function $\sigma$ in \eqref{SDE system} around the initial condition $x_{N,k}(k/N)$ to get
\begin{align}\label{SDE system 2}
\dot{x}_{N,k+1}(t)\approx&\left(\sigma(x_{N,k}(k/N))+\sigma'(x_{N,k}(k/N))(x_{N,k+1}(t)-x_{N,k}(k/N))\right)\diamond\frac{B((k+1)/N)-B(k/N)}{1/N}\nonumber\\
=&\sigma'(x_{N,k}(k/N))x_{N,k+1}(t)\diamond\frac{B((k+1)/N)-B(k/N)}{1/N}\nonumber\\
&+\left(\sigma(x_{N,k}(k/N))-\sigma'(x_{N,k}(k/N))x_{N,k}(k/N))\right)\diamond\frac{B((k+1)/N)-B(k/N)}{1/N}
\end{align}
Equation \eqref{SDE system 2} is linear in $\dot{x}_{N,k+1}(t)$ and can be easily solved either using the reduction method proposed in \cite{HuOksendal} or by means of (conditional on $\mathcal{F}^B_{k/N}$) $\mathcal{S}$-transform \cite{Kuobook} which will convert the previous stochastic equation into a linear deterministic one. In particular, one will find that the value of the solution to \eqref{SDE system 2} at the node $(k+1)/N$ is given by 
\begin{align*}
x_{N,k+1}((k+1)/N)=&x_{N,k}(k/N)\\
&+\frac{\sigma(x_{N,k}(k/N))}{\sigma'(x_{N,k}(k/N))}\left(\exp^{\diamond}\left\{\sigma'(x_{N,k}(k/N))\left(B\left(\frac{k+1}{N}\right)-B\left(\frac{k}{N}\right)\right)\right\}-1\right).
\end{align*}
If we now set $x_N(k):=x_{N,k}(k/N)$ for $k\in\{0,...,N\}$ we see that the previous identity coincides with scheme \eqref{scheme}.
 
\section{Proof of the main result}\label{proof}

To ease the notation, for $k\in\{1,...,N\}$ we set
\begin{align*}
Z_{k}:=\sqrt{N}\left(B\left(\frac{k}{N}\right)-B\left(\frac{k-1}{N}\right)\right),
\end{align*}
so that $\{Z_k\}_{k\in\{1,...,N\}}$ is a collection of independent standard Gaussian random variables; therefore, scheme \eqref{scheme} now reads
\begin{align}
\begin{cases}\label{scheme 2}
x_N(k+1)=x_N(k)+\frac{\sigma(x_N(k))}{\sigma'(x_N(k))}\left(\exp^{\diamond}\left\{\sigma'(x_N(k))Z_{k+1}/\sqrt{N}\right\}-1\right),& k\in\{0,...,N-1\};\\
x_N(0)=a.
\end{cases}
\end{align}
We start proving the following upper bound for the second moment of $x_N(k+1)$. In the sequel we will write $\mathcal{F}_k$ for the sigma algebra generated by $\{Z_1,...,Z_k\}$.
 
\begin{lemma}\label{lemma second moment}
For all $N\in\mathbb{N}$ and $k\in\{0,...,N\}$ we have
\begin{align}\label{lemma inequality}
\mathbb{E}[x_N(k)^2]&\leq \left(1+M_1\frac{\exp\{L_1^2/N\}}{N}\right)^k(a^2+1)-1,
\end{align} 
where $L_1$ stands for the Lipschitz constant of the function $\sigma$ while $M_1$ is a positive constant with which the inequality
\begin{align}\label{M_1}
\sigma(x)^2&\leq M_1(1+x^2),\quad x\in\mathbb{R}
\end{align}
holds true (the existence of such $M_1$ follows from the global Lipschitz continuity of $\sigma$).
\end{lemma}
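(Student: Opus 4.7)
The strategy is to establish a one-step recursive inequality for $\mathbb{E}[x_N(k)^2]$ and then iterate. The key observation, which also underlies the design of the scheme, is that the Wick exponential $\exp^{\diamond}\{cZ_{k+1}/\sqrt{N}\}=\exp\{cZ_{k+1}/\sqrt{N}-c^2/(2N)\}$ has unit mean whenever $c$ is deterministic. Applied conditionally on $\mathcal{F}_k$---so that $\sigma'(x_N(k))$ plays the role of a deterministic coefficient---this shows that the increment $\Delta_k:=x_N(k+1)-x_N(k)$ satisfies $\mathbb{E}[\Delta_k\mid\mathcal{F}_k]=0$. Hence $\{x_N(k)\}$ is an $\{\mathcal{F}_k\}$-martingale and the orthogonality of its increments yields $\mathbb{E}[x_N(k+1)^2\mid\mathcal{F}_k]=x_N(k)^2+\mathbb{E}[\Delta_k^2\mid\mathcal{F}_k]$.

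Next I would compute the conditional second moment of the increment explicitly. Writing $c=\sigma'(x_N(k))$ and $W=Z_{k+1}/\sqrt{N}$, expansion of the square gives
\begin{equation*}
(\exp^{\diamond}\{cW\}-1)^2=\exp\{2cW-c^2/N\}-2\exp\{cW-c^2/(2N)\}+1,
\end{equation*}
and two direct Gaussian moment generating function computations (conditional on $\mathcal{F}_k$) reduce the right-hand side to $e^{c^2/N}-1$. Substituting back and using the elementary inequality $e^y-1\leq ye^y$ for $y\geq 0$ together with $|\sigma'|\leq L_1$, one obtains
\begin{equation*}
\mathbb{E}[\Delta_k^2\mid\mathcal{F}_k]=\frac{\sigma(x_N(k))^2}{\sigma'(x_N(k))^2}\bigl(e^{\sigma'(x_N(k))^2/N}-1\bigr)\leq\frac{\sigma(x_N(k))^2\exp\{L_1^2/N\}}{N}\leq M_1\frac{\exp\{L_1^2/N\}}{N}\bigl(1+x_N(k)^2\bigr).
\end{equation*}
The cancellation that makes this bound hold also automatically takes care of the indeterminacy at zeros of $\sigma'$ already addressed in Remark \ref{vanishing}, since the factor $\sigma'(x_N(k))^2$ in the denominator is exactly compensated by the one produced by $e^y-1\leq ye^y$.

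Finally, taking full expectation in the combined inequality and setting $\alpha_k:=\mathbb{E}[x_N(k)^2]+1$, the ``$+1$'' on the right-hand side is absorbed into $\alpha_k$ to give
\begin{equation*}
\alpha_{k+1}\leq\Bigl(1+M_1\frac{\exp\{L_1^2/N\}}{N}\Bigr)\alpha_k,
\end{equation*}
and induction on $k$ starting from $\alpha_0=a^2+1$ delivers precisely \eqref{lemma inequality}. I do not foresee a genuine obstacle: the only slightly delicate piece is the explicit evaluation of $\mathbb{E}[(\exp^{\diamond}\{cW\}-1)^2\mid\mathcal{F}_k]$, after which the remainder is a standard discrete Gr\"onwall-type iteration.
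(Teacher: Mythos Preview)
Your proposal is correct and follows essentially the same route as the paper: compute $\mathbb{E}[x_N(k+1)^2\mid\mathcal{F}_k]$ using that the Wick exponential minus one has zero conditional mean, evaluate the conditional variance as $e^{\sigma'(x_N(k))^2/N}-1$, bound $(e^y-1)/y\le e^y$ together with $|\sigma'|\le L_1$ and $\sigma^2\le M_1(1+x^2)$, and then iterate the resulting one-step inequality. The only cosmetic differences are that the paper derives $(e^y-1)/y\le e^y$ via the power series and solves the recursion by summing the geometric series, whereas you use the equivalent inequality $e^y-1\le ye^y$ and the substitution $\alpha_k=\mathbb{E}[x_N(k)^2]+1$.
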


\begin{proof}
Notice that for all $k\in\{1,...,N\}$ the random variable $x_N(k)$ is $\mathcal{F}_k$-measurable and hence taking the square of both sides in \eqref{scheme 2} and computing conditional expectations with respect to the sigma-algebra $\mathcal{F}_k$ we get
\begin{align*}
\mathbb{E}[x_N(k+1)^2|\mathcal{F}_k]=x_N(k)^2+\frac{\sigma(x_N(k))^2}{\sigma'(x_N(k))^2}\mathbb{E}\left[\left(\exp^{\diamond}\left\{\sigma'(x_N(k))Z_{k+1}/\sqrt{N}\right\}-1\right)^2\bigg|\mathcal{F}_k\right].
\end{align*}
Here, the mixed product of $\frac{\sigma(x_N(k))}{\sigma'(x_N(k))}$ with $\exp^{\diamond}\left\{\sigma'(x_N(k))Z_{k+1}/\sqrt{N}\right\}-1$ vanishes since the first term is $\mathcal{F}_k$-measurable while the second one has null conditional expectation. Moreover, a simple computation gives
\begin{align*}
\mathbb{E}\left[\left(\exp^{\diamond}\left\{\sigma'(x_N(k))Z_{k+1}/\sqrt{N}\right\}-1\right)^2\bigg|\mathcal{F}_k\right]=\exp\left\{\sigma'(x_N(k))^2/N\right\}-1;
\end{align*}
hence,
\begin{align*}
\mathbb{E}[x_N(k+1)^2|\mathcal{F}_k]=x_N(k)^2+\frac{\sigma(x_N(k))^2}{\sigma'(x_N(k))^2}\left(\exp\left\{\sigma'(x_N(k))^2/N\right\}-1\right),
\end{align*}
and taking expectation of both sides we get
\begin{align*}
\mathbb{E}[x_N(k+1)^2]=\mathbb{E}[x_N(k)^2]+\mathbb{E}\left[\sigma(x_N(k))^2\frac{\exp\left\{\sigma'(x_N(k))^2/N\right\}-1}{\sigma'(x_N(k))^2}\right].
\end{align*}
Moreover,
\begin{align*}
\frac{\exp\left\{\sigma'(x_N(k))^2/N\right\}-1}{\sigma'(x_N(k))^2}&=\sum_{j\geq 1}\frac{\sigma'(x_N(k))^{2(j-1)}}{j!N^j}\\
&=\frac{1}{N}\sum_{j\geq 1}\frac{\sigma'(x_N(k))^{2(j-1)}}{j!N^{j-1}}\\
&\leq \frac{1}{N}\sum_{j\geq 1}\frac{\sigma'(x_N(k))^{2(j-1)}}{(j-1)!N^{j-1}}\\
&=\frac{\exp\{\sigma'(x_N(k))^2/N\}}{N}\\
&\leq \frac{\exp\{L_1^2/N\}}{N}.
\end{align*}
Therefore,
\begin{align*}
\mathbb{E}[x_N(k+1)^2]&= \mathbb{E}[x_N(k)^2]+\mathbb{E}\left[\sigma(x_N(k))^2\frac{\exp\left\{\sigma'(x_N(k))^2/N\right\}-1}{\sigma'(x_N(k))^2}\right]\\
&\leq\mathbb{E}[x_N(k)^2]+ \frac{\exp\{L_1^2/N\}}{N}\mathbb{E}\left[\sigma(x_N(k))^2\right]\\
&\leq\mathbb{E}[x_N(k)^2]+ \frac{\exp\{L_1^2/N\}}{N}\mathbb{E}\left[M_1(1+x_N(k)^2)\right]\\
&=\mathbb{E}[x_N(k)^2]\left(1+M_1\frac{\exp\{L_1^2/N\}}{N}\right)+ M_1\frac{\exp\{L_1^2/N\}}{N};
\end{align*}
here, we utilized the inequality
\begin{align*}
\sigma(x)^2\leq M_1(1+x^2),\quad x\in\mathbb{R}.
\end{align*}
The previous computation shows that the sequence of second moments $k\mapsto \mathbb{E}[x_N(k)^2]$ satisfies the recursive inequality
\begin{align*}
\mathbb{E}[x_N(k+1)^2]&\leq\mathbb{E}[x_N(k)^2]\left(1+M_1\frac{\exp\{L_1^2/N\}}{N}\right)+ M_1\frac{\exp\{L_1^2/N\}}{N},
\end{align*} 
with $\mathbb{E}[x_N(0)^2]=a^2$, which gives via a simple algebraic manipulation
\begin{align*}
\mathbb{E}[x_N(k)^2]&\leq a^2\left(1+M_1\frac{\exp\{L_1^2/N\}}{N}\right)^k+M_1\frac{\exp\{L_1^2/N\}}{N}\frac{\left(1+M_1\frac{\exp\{L_1^2/N\}}{N}\right)^{k}-1}{\left(1+M_1\frac{\exp\{L_1^2/N\}}{N}\right)-1}\\
&=a^2\left(1+M_1\frac{\exp\{L_1^2/N\}}{N}\right)^k+\left(1+M_1\frac{\exp\{L_1^2/N\}}{N}\right)^{k}-1\\
&=\left(1+M_1\frac{\exp\{L_1^2/N\}}{N}\right)^k(a^2+1)-1.
\end{align*} 
The proof is complete.
\end{proof}

We are now ready to prove Theorem \ref{main theorem}; we will compare our scheme with the Milstein's one and show that they are of the same order. In the sequel we will denote the Milstein scheme with $\{y_N(k)\}_{k\in\{0,...,N\}}$, namely
\begin{align}
\begin{cases}\label{Milstein scheme}
y_N(k+1)=y_N(k)+\sigma(y_N(k))Z_{k+1}/\sqrt{N}+\sigma(y_N(k))\sigma'(y_N(k))\left(Z_{k+1}^2-1\right)/2N\\
y_N(0)=a.
\end{cases}
\end{align}
We start with 
\begin{align*}
\mathbb{E}[|X(1)-x_N(N)|]&\leq \mathbb{E}[|X(1)-y_N(N)|]+\mathbb{E}[|y_N(N)-x_N(N)|]\\
&\leq \mathbb{E}[|X(1)-y_N(N)|]+\mathbb{E}[|y_N(N)-x_N(N)|^2]^{\frac{1}{2}}.
\end{align*}
It is well known (see \cite{KloedenPlaten}) that there exists a constant $C_1$ such that
\begin{align}\label{z}
\mathbb{E}[|X(1)-y_N(N)|]\leq\frac{C_1}{N};
\end{align}
our aim is to prove that there exists a constant $C_2$ such that
\begin{align*}
\mathbb{E}[|y_N(N)-x_N(N)|^2]^{\frac{1}{2}}\leq\frac{C_2}{N},
\end{align*}
which in combination with \eqref{z} is equivalent to \eqref{inequality theorem}. \\
For all $N\in\mathbb{N}$ and $k\in\{0,...,N-1\}$ we have (recall Remark \ref{Milstein remark})
\begin{align*}
x_N(k+1)-y_N(k+1)=&x_N(k)-y_N(k)+(\sigma(x_N(k))-\sigma(y_N(k)))Z_{k+1}/\sqrt{N}\\
&+(\sigma(x_N(k))\sigma'(x_N(k))-\sigma(y_N(k))\sigma'(y_N(k)))\left(Z_{k+1}^2-1\right)/2N\\
&+\frac{\sigma(x_N(k))}{\sigma'(x_N(k))}\sum_{j\geq 3}\frac{\sigma'(x_N(k))^j}{j!}Z_{k+1}^{\diamond j}/N^{\frac{j}{2}}.
\end{align*}
We now take the square of both sides in the equality above and compute conditional expectations with respect to the sigma-algebra $\mathcal{F}_k$; recall that the Wick powers $Z_{k+1}^{\diamond j}$ are orthogonal in $\mathbb{L}^2(\Omega)$ for different $j$'s (see \cite{Janson}) and hence all the mixed products generated with the square of the right hand side above will vanish in conditional expectation. Moreover, the squared $\mathbb{L}^2(\Omega)$-norm of $Z_{k+1}^{\diamond j}$ is $j!$. Therefore,
\begin{align*}
\mathbb{E}[(x_N(k+1)-y_N(k+1))^2|\mathcal{F}_k]=&(x_N(k)-y_N(k))^2+(\sigma(x_N(k))-\sigma(y_N(k)))^2/N\\
&+(\sigma(x_N(k))\sigma'(x_N(k))-\sigma(y_N(k))\sigma'(y_N(k)))^2/2N^2\\
&+\frac{\sigma(x_N(k))^2}{\sigma'(x_N(k))^2}\sum_{j\geq 3}\frac{\sigma'(x_N(k))^{2j}}{j!^2}j!/N^{j}\\
=&(x_N(k)-y_N(k))^2+(\sigma(x_N(k))-\sigma(y_N(k)))^2/N\\
&+(\sigma(x_N(k))\sigma'(x_N(k))-\sigma(y_N(k))\sigma'(y_N(k)))^2/2N^2\\
&+\frac{\sigma(x_N(k))^2}{\sigma'(x_N(k))^2}\sum_{j\geq 3}\frac{\sigma'(x_N(k))^{2j}}{j!N^j}.
\end{align*}
Let us focus on the last term above:
\begin{align*}
\frac{\sigma(x_N(k))^2}{\sigma'(x_N(k))^2}\sum_{j\geq 3}\frac{\sigma'(x_N(k))^{2j}}{j!N^j}&=\frac{\sigma(x_N(k))^2}{N}\sum_{j\geq 3}\frac{\sigma'(x_N(k))^{2(j-1)}}{j!N^{j-1}}\\
&\leq\frac{\sigma(x_N(k))^2}{N}\sum_{j\geq 3}\frac{\sigma'(x_N(k))^{2(j-1)}}{(j-1)!N^{j-1}}\\
&\leq\frac{\sigma(x_N(k))^2}{N}\sum_{j\geq 3}\frac{L_1^{2(j-1)}}{(j-1)!N^{j-1}}\\
&=\frac{\sigma(x_N(k))^2}{N}\left(e^{L_1^2/N}-1-L_1^2/N\right).
\end{align*}
This gives
\begin{align*}
\mathbb{E}[(x_N(k+1)-y_N(k+1))^2|\mathcal{F}_k]\leq&(x_N(k)-y_N(k))^2+(\sigma(x_N(k))-\sigma(y_N(k)))^2/N\\
&+(\sigma(x_N(k))\sigma'(x_N(k))-\sigma(y_N(k))\sigma'(y_N(k)))^2/2N^2\\
&+\frac{\sigma(x_N(k))^2}{N}\left(e^{L_1^2/N}-1-L_1^2/N\right)
\end{align*}
and taking the expectation of both sides yields
\begin{align}\label{w}
\mathbb{E}[(x_N(k+1)-y_N(k+1))^2]\leq&\mathbb{E}[(x_N(k)-y_N(k))^2]+\mathbb{E}[(\sigma(x_N(k))-\sigma(y_N(k)))^2]/N\nonumber\\
&+\mathbb{E}[(\sigma(x_N(k))\sigma'(x_N(k))-\sigma(y_N(k))\sigma'(y_N(k)))^2]/2N^2\nonumber\\
&+\frac{\mathbb{E}[\sigma(x_N(k))^2]}{N}\left(e^{L_1^2/N}-1-L_1^2/N\right)\nonumber\\
\leq&\left(1+\frac{L_1^2}{N}+\frac{L_2^2}{2N^2}\right)\mathbb{E}[(x_N(k)-y_N(k))^2]\nonumber\\
&+\frac{\mathbb{E}[\sigma(x_N(k))^2]}{N}\left(e^{L_1^2/N}-1-L_1^2/N\right)\nonumber\\
\leq&\left(1+\frac{L_1^2}{N}+\frac{L_2^2}{2N^2}\right)\mathbb{E}[(x_N(k)-y_N(k))^2]\nonumber\\
&+\frac{M_1}{N}\left(e^{L_1^2/N}-1-L_1^2/N\right)\nonumber\\
&+\frac{M_1\left(\left(1+M_1\frac{\exp\{L_1^2/N\}}{N}\right)^N(a^2+1)-1\right)}{N}\left(e^{L_1^2/N}-1-L_1^2/N\right)\nonumber\\
=&\left(1+\frac{L_1^2}{N}+\frac{L_2^2}{2N^2}\right)\mathbb{E}[(x_N(k)-y_N(k))^2]+\Lambda_N.
\end{align}
Here, in the second inequality we exploited the Lipschitz continuity of $\sigma$ (with constant $L_1$) and of $\sigma\sigma'$ (with constant $L_2$); in the third inequality we utilized the upper bound \eqref{M_1} together with Lemma \eqref{lemma second moment} (with $k$ replaced by $N$ in the right hand side of \eqref{lemma inequality}). In addition, we set
\begin{align*}
\Lambda_N:=\frac{M_1\left(1+M_1\frac{\exp\{L_1^2/N\}}{N}\right)^N(a^2+1)}{N}\left(e^{L_1^2/N}-1-L_1^2/N\right).
\end{align*}
Notice that $\Lambda_N\thicksim \frac{1}{N^3}$, as $N$ tends to infinity. If we now compare the first and last members in \eqref{w} we deduce that
\begin{align*}
\mathbb{E}[(x_N(k+1)-y_N(k+1))^2]\leq\left(1+\frac{L_1^2}{N}+\frac{L_2^2}{2N^2}\right)\mathbb{E}[(x_N(k)-y_N(k))^2]+\Lambda_N
\end{align*}
and hence
\begin{align}\label{last}
\mathbb{E}[(x_N(N)-y_N(N))^2]\leq \Lambda_N\frac{\left(1+\frac{L_1^2}{N}+\frac{L_2^2}{2N^2}\right)^N-1}{\frac{L_1^2}{N}+\frac{L_2^2}{2N^2}};
\end{align}
recall that $\mathbb{E}[(x_N(0)-y_N(0))^2]=0$. Observing that the ratio in \eqref{last} goes like $N$ as $N$ tends to infinity we conclude saying that 
\begin{align*}
\mathbb{E}[(x_N(N)-y_N(N))^2]\thicksim \frac{1}{N^2}
\end{align*}
and hence 
\begin{align*}
\mathbb{E}[(x_N(N)-y_N(N))^2]^{\frac{1}{2}}\thicksim \frac{1}{N}
\end{align*}
which is precisely what we wanted to show.

To prove the second part of the statement of Theorem \ref{main theorem} we now assume $\sigma$ to be of the form $\sigma(x)=\alpha x$, $x\in\mathbb{R}$, for some real constant $\alpha\neq 0$. Then, scheme \eqref{scheme} simplifies to
\begin{align*}
x_N(k+1)&=x_N(k)+x_N(k)\left(\exp^{\diamond}\left\{\alpha\left(B\left(\frac{k+1}{N}\right)-B\left(\frac{k}{N}\right)\right)\right\}-1\right)\\
&=x_N(k)\exp^{\diamond}\left\{\alpha\left(B\left(\frac{k+1}{N}\right)-B\left(\frac{k}{N}\right)\right)\right\},\quad k\in\{0,...,N-1\}.
\end{align*}
It is easy to see using standard Wick calculus \cite{OksendalSPDEbook} and the independence of Brownian increments that the solution to the previous recursive equation is given by
\begin{align*}
x_N(k)&=a\exp^{\diamond}\left\{\alpha B\left(\frac{1}{N}\right)\right\}\cdot\cdot\cdot\exp^{\diamond}\left\{\alpha\left(B\left(\frac{k}{N}\right)-B\left(\frac{k-1}{N}\right)\right)\right\}\\
&=a\exp^{\diamond}\left\{\alpha B\left(\frac{k}{N}\right)\right\},\quad k\in\{0,...,N\}.
\end{align*}
Recalling the correspondence between Wick and ordinary exponentials mentioned in Remark \ref{wick exponential} we find:
\begin{equation}\label{linear diffusion analytic solution}
	x_N(k) = a\exp\left\{ \alpha B\left(\frac{k}{N}\right)-\frac{\alpha^2k}{2N} \right\}
\end{equation}
so the $x_N(k)$ written above in expression \eqref{linear diffusion analytic solution} coincides with $X\left(\frac{k}{N}\right)$, the strong solution of \eqref{SDE}, with $\sigma(x)=\alpha x$, evaluated at $\frac{k}{N}$. The proof is complete.
 
\bibliographystyle{abbrv}
\bibliography{wick_numerical_SDE}

\end{document}